\documentclass[reqno, 10pt]{amsart}
\usepackage[
 	hmarginratio={1:1},     % equal left and right margins
 	vmarginratio={1:1},     % equal top and bottom margins
	textheight=600pt,
	textwidth=400pt,        % new text width
 	heightrounded,          % always useful
  %bindingcorrection=5mm,  % binding correction
]{geometry}
% See the ``Article customise'' template for come common customisations
\usepackage{amssymb}
\usepackage{hyperref}
\usepackage{tikz-cd}

\title[Extensions of the representation ring global functor and $K$-theory]{Rational extensions of the representation ring global functor and a splitting of global equivariant $K$-theory}
\author{Christian Wimmer}
\date{\today} % delete this line to display the current date
\address{Mathematisches Institut, Universit\"at Bonn, Endenicher Allee 60, 53115 Bonn, Germany}
\email{wimmer@math.uni-bonn.de}
\keywords{Global functors, representation ring, equivariant $K$-theory, Chern character}
\subjclass[2010]{19A22, 20C05, 55P91}
\newcommand{\bbA}{\mathbb A}
\newcommand{\bbC}{\mathbb C}
\newcommand{\bbN}{\mathbb N}
\newcommand{\bbQ}{\mathbb Q}
\newcommand{\bbZ}{\mathbb Z}

\newcommand{\lra}{\longrightarrow}
\newcommand{\ra}{\rightarrow}

\newcommand{\RU}{\mathbf{RU}}
\newcommand{\RUrat}{\mathbf{RU}_{\bbQ}}
\newcommand{\RO}{\mathbf{RO}}

\newcommand{\tensor}{\otimes}

\newcommand{\Outop}{\Out^{\op}}
\newcommand{\Outopmod}{\Out^{\op}\hyph\module}
\newcommand{\Outopcyc}{\Out^{\op}_{\cyc}}
\newcommand{\Outopcycp}{\Out^{\op}_{\cyc\hyph p}}

\newcommand{\GF}{\mathcal{GF}}

\DeclareMathOperator{\Aut}{Aut}
\DeclareMathOperator{\cyc}{cyc}
\DeclareMathOperator{\Ext}{Ext}

\DeclareMathOperator{\fin}{fin}
\DeclareMathOperator{\Hom}{Hom}
\DeclareMathOperator{\hyph}{-}
\DeclareMathOperator{\Id}{Id}
\DeclareMathOperator{\module}{mod}
\DeclareMathOperator{\Out}{Out}
\DeclareMathOperator{\op}{op}
\DeclareMathOperator{\res}{res}
\DeclareMathOperator{\tr}{tr}

\DeclareMathOperator{\KU}{KU}

\numberwithin{equation}{section}
\newtheorem{Corollary}[equation]{Corollary}
\newtheorem{Lemma}[equation]{Lemma}
\newtheorem{Proposition}[equation]{Proposition}
\newtheorem{Theorem}[equation]{Theorem}
\newtheorem{Question}[equation]{Question}

\theoremstyle{definition}
\newtheorem{Definition}[equation]{Definition}
\newtheorem{Example}[equation]{Example}

\theoremstyle{remark}
\newtheorem{Remark}[equation]{Remark}

%%% BEGIN DOCUMENT
\begin{document}

\maketitle

\begin{abstract}
We identify the group of homomorphisms $\Hom_{\GF}(F,\RUrat)$ in the category of ($\fin$)-global functors to the rationalization of the unitary representation ring functor
and deduce that the higher $\Ext$-groups $\Ext^n_{\GF}(F,\RUrat)$, $n\geq 2$ have to vanish.
This leads to a rational splitting of the ($\fin$)-global equivariant $K$-theory spectrum into a sum of Eilenberg-MacLane spectra.
Interpreted in terms of cohomology theories, it means that the equivariant Chern character is compatible with restrictions along all group homomorphisms.
\end{abstract}

\setcounter{tocdepth}{1}
\tableofcontents

\section{Introduction}
Globally defined Mackey functors have emerged in the last two decades as a naturally occuring algebraic structure with applications in group cohomology and representation theory
\cite{Webb}, \cite{guidemf}, \cite{bouc1}, \cite{bouc2}, \cite{webbstrat}.
\emph{Global functors} are a particular version (called inflation functors in \cite{Webb}) appearing in equivariant topology,
where the 'global' perspective has also proven to be illuminating in the study of equivariant homotopy types \cite{globalsymmetric}, \cite{hausi}, \cite{hausmann}.
They are defined more generally for compact Lie groups, but we will restrict attention to finite groups in the following.
%Actually, they are more generally defined for all compact Lie groups, but we will restrict attention to finite groups.
Roughly speaking, a ($\fin$)-global functor $M$ consists of abelian groups $M(G)$ for every finite group $G$,
which are related by certain transfer (or induction) maps along subgroup inclusions and restrictions along all group homomorphisms, cf.\ Definition \ref{def:globalfunctor}. 
A global functor in particular encodes compatible Mackey functors for each group $G$, but the restrictions along surjective morphisms, the \emph{inflations}, form an additional datum.

This paper is concerned with the computation of certain $\Ext$-groups in the category of global functors
with respect to finite groups, see Theorem \ref{thm:datheorem} below. They were motivated by the following question in (global) equivariant stable homotopy theory:
\begin{Question}
Does the global $K$-theory spectrum split rationally ?
\end{Question}

Historically, topological $K$-theory \cite{atiyahhirzebruch} was one of the first generalized cohomology theories considered. 
The Chern character is a natural transformation (say on finite complexes)
\[
	\operatorname{Ch}:K(X)\lra \bigoplus_{n\geq 0} H^{2n}(X,\bbQ),
\]
which gives a rational isomorphism with the singular homology groups in even degrees.
From the point of view of stable homotopy theory, this is a consequence of the rational splitting of the stable homotopy category $\mathcal{SHC}$:
Taking homotopy groups induces an equivalence
\(
	\mathcal{SHC}_{\bbQ}\simeq \operatorname{gr.} \bbQ\hyph\module
\)
with the category of integer-graded rational vector spaces. So every spectrum with rational homotopy groups decomposes into a sum of Eilenberg-MacLane spectra
(i.e.\ those with homotopy groups concentrated in a single degree); in particular, we have a decomposition
\[
	\KU_{\bbQ}\simeq \bigvee_{n\in \bbZ}\Sigma^{2n}H\bbQ
\]
for the spectrum $\KU$ representing $K$-theory.

In the classical equivariant setting, the situation is similar.
For a finite group $G$, there is a natural generalization of $K$-theory to an '${RO}(G)$-graded' theory $K_G$ (\cite{segal}, \cite[XIV]{alaska}),
the most sophisticated version of cohomology theories on $G$-spaces usually considered.
It is represented by a spectrum $\KU_G$ in the genuine $G$-equivariant stable homotopy category, which also splits rationally (cf.\ \cite[Appendix A]{tate}).
In terms of cohomology theories this means that there is an equivariant Chern character, compatible with the natural structure maps of these theories,
e.g.\ restriction and transfer maps relating the values at various subgroups of $G$.

But he story is not quite over yet: as the group $G$ varies, the equivariant homotopy types $\KU_G$ do not just form some collection. 
They are part of a \emph{global equivariant} homotopy type $\KU$, which can be thought of as encoding a spectrum valued global functor.
In global equivariant homotopy theory one systematically studies these and the resulting analogue of the stable homotopy category is
the \emph{global stable homotopy category} $\mathcal{GH}$. Rephrasing the above,
there is a global $K$-theory spectrum $\KU\in \mathcal{GH}$
that represents the equivariant cohomology theories $K_G$ \cite[6.4]{global}.
The equivariant homotopy groups $\pi_k^GE\cong E_G^{-k}(\ast)$ of a global stable homotopy type $E$
are the coefficients of the associated $G$-equivariant cohomology theory and as $G$ runs over all finite groups
these naturally carry the structure of a global functor. In the case of $K$-theory we obtain the representation ring:
\[
	\pi_0^G \KU\cong K_G(\ast)\cong \RU(G).
\]

In contrast to the classical setting, rational global homotopy types in general do not decompose into global Eilenberg-MacLane spectra:
The rational $(\fin)$-global homotopy category is algebraic in the sense that there is an equivalence
\[
	\mathcal{GH}_{\bbQ}\simeq \mathcal D(\GF_{\bbQ})
\]
with the derived category of global functors under which taking homotopy corresponds to taking homology \cite[Thm 4.5.29]{global}.
However, even rationally the homological dimension is infinite (see Remark \ref{remark:infinitehomological}) and so homology does not yield a faithful functor to the graded category.
But it turns out that the appropriate $\Ext$-groups vanish in the special case of $K$-theory (Theorem \ref{thm:daktheorythm}).

This is based on the following calculation. For each integer $n\geq 1$, let $C_n$ be a cyclic group of order $n$. If $F$ is a global functor,
then $F(C_n)^{\vee}_{\tr}$ denotes the $\bbQ$-linear forms on $F(C_n)$ that vanish on the image of all transfers from proper subgroups.
These can be organized into an inverse system, cf.\ Definition \ref{def:inversesystem}.

\begin{Theorem}\label{thm:datheorem}
There is a preferred natural isomorphism
\[
	\Hom_{\GF_\bbQ}(F,\RUrat)\cong {\varprojlim}_{n\in(\bbN,\mid)}(F(C_n)^{\vee}_{\tr})
\]
identifying maps into $\RUrat$ as an inverse limit over the poset of natural numbers with respect to the divisibility relation.
Moreover, it also induces isomorphisms 
\[
	\Ext^\ast_{\GF_\bbQ}(F,\RUrat)\cong {\varprojlim}_{n\in(\bbN,\mid)}^\ast (F(C_n)^{\vee}_{\tr})
\]
between higher derived functors.
\end{Theorem}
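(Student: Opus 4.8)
The plan is to proceed in two stages: reduce the computation to cyclic groups, where $\RU$ is governed by Galois theory, and then recognise $\RUrat$ as a homotopy limit, indexed by $(\bbN,\mid)$, of easily understood injective global functors. Two structural inputs are used. The first is rational Artin induction: for every finite group $G$ the restriction maps identify $\RU(G)_{\bbQ}$ with $\varprojlim_{C}\RU(C)_{\bbQ}$ over the poset of conjugacy classes of cyclic subgroups $C\leq G$, and $\RU(G)_{\bbQ}$ is spanned by transfers from proper subgroups whenever $G$ is not cyclic. The second is the geometric fixed point functor $\widetilde{\Phi}^{G}F:=F(G)/\sum_{H\subsetneq G}\tr_{H}^{G}F(H)$, a module over $\Out(G)$: rationally $\widetilde{\Phi}^{G}$ is exact --- it is corepresented by the projective summand of $\underline{\bbA}_{G}\tensor\bbQ$ cut out by the idempotent splitting off the transfers --- and it has a right adjoint $\mathrm{CoInd}_{G}\colon\bbQ[\Out(G)]\hyph\module\to\GF_{\bbQ}$, which therefore preserves injectives; since $\bbQ[\Out(G)]$ is semisimple, every $\mathrm{CoInd}_{G}(M)$ is injective in $\GF_{\bbQ}$ and $\Ext^{\ast}_{\GF_{\bbQ}}(F,\mathrm{CoInd}_{G}(M))=\Hom_{\bbQ[\Out(G)]}(\widetilde{\Phi}^{G}F,M)$ is concentrated in degree $0$. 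The key computation is that $\widetilde{\Phi}^{G}\RUrat=0$ for $G$ non-cyclic (by Artin induction), while for $G=C_{n}$ the character decomposition $\RU(C_{n})_{\bbQ}\cong\prod_{d\mid n}\bbQ(\zeta_{d})$ puts the images of all transfers from proper subgroups in $\bigoplus_{d\mid n,\ d<n}\bbQ(\zeta_{d})$, so that $\widetilde{\Phi}^{C_{n}}\RUrat\cong\bbQ(\zeta_{n})$; since $\Out(C_{n})=(\bbZ/n)^{\times}=\mathrm{Gal}(\bbQ(\zeta_{n})/\bbQ)$ acts through the Galois action, the normal basis theorem identifies this with the free $\bbQ[\Out(C_{n})]$-module of rank one. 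As $\bbQ[\Out(C_{n})]$ is a symmetric algebra, $\Hom_{\bbQ[\Out(C_{n})]}(\widetilde{\Phi}^{C_{n}}F,\widetilde{\Phi}^{C_{n}}\RUrat)\cong(\widetilde{\Phi}^{C_{n}}F)^{\vee}=F(C_{n})^{\vee}_{\tr}$, which is already the shape of the answer.

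I would first prove the $\Hom$-isomorphism by hand. The natural map sends $\varphi\colon F\to\RUrat$ to the family whose $n$-th entry is $F(C_{n})\xrightarrow{\varphi}\RU(C_{n})_{\bbQ}\twoheadrightarrow\bbQ(\zeta_{n})$; because $\varphi$ commutes with transfers and the transfer images lie in the lower summands, this composite factors through $\widetilde{\Phi}^{C_{n}}F$ and yields a $\bbQ[\Out(C_{n})]$-linear form, i.e.\ an element of $F(C_{n})^{\vee}_{\tr}$, and compatibility with the inflations of $\RU$ places the family in the inverse limit of Definition~\ref{def:inversesystem}; naturality in $F$ is evident. Injectivity is the limit form of Artin induction: a value $\varphi_{G}(x)$ is detected by its restrictions to cyclic subgroups, hence determined by the $\varphi_{C_{n}}$. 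For surjectivity I would reverse the construction via the class-function picture: a compatible system $(\phi_{n})_{n}$ assigns to $x\in F(G)$ the class function $g\mapsto\phi_{\mathrm{ord}(g)}\!\big(\res^{G}_{\langle g\rangle}x\big)$, which is well defined because inner automorphisms act trivially on $F$ and the $\phi_{n}$ are $\Out(C_{n})$-invariant; Artin induction guarantees this class function lies in $\RU(G)_{\bbQ}$ and that the assignment is a morphism of global functors, visibly a section of the previous map.

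To deduce the statement for $\Ext$ I would upgrade the $\Hom$-isomorphism to an identification of derived functors. The computation of $\widetilde{\Phi}^{\bullet}\RUrat$ is exactly what is needed to see that the canonical comparison $\RUrat\to\operatorname{holim}_{n\in(\bbN,\mid)}\mathrm{CoInd}_{C_{n}}(\bbQ[\Out(C_{n})])$ is an equivalence in $\mathcal{D}(\GF_{\bbQ})$: one applies the exact geometric fixed point functors $\widetilde{\Phi}^{G}$, which jointly detect quasi-isomorphisms, observes that both sides vanish under $\widetilde{\Phi}^{G}$ for non-cyclic $G$, and for $G=C_{m}$ reduces to a double-coset (base-change) formula for $\widetilde{\Phi}^{C_{m}}\circ\mathrm{CoInd}_{C_{n}}$. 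Granting this, one obtains $R\Hom_{\GF_{\bbQ}}(F,\RUrat)\simeq R\varprojlim_{n\in(\bbN,\mid)}F(C_{n})^{\vee}_{\tr}$ --- mapping out of a homotopy limit is the homotopy limit of mapping objects, which here are the degree-$0$ modules $\Hom_{\bbQ[\Out(C_{n})]}(\widetilde{\Phi}^{C_{n}}F,\bbQ[\Out(C_{n})])\cong F(C_{n})^{\vee}_{\tr}$ --- and passing to cohomology gives $\Ext^{\ast}_{\GF_{\bbQ}}(F,\RUrat)\cong{\varprojlim}_{n\in(\bbN,\mid)}^{\ast}F(C_{n})^{\vee}_{\tr}$, naturally and compatibly with the $\Hom$-isomorphism in degree $0$. (Since $\varprojlim^{\ast}$ over $(\bbN,\mid)$ is unbounded, this is in keeping with Remark~\ref{remark:infinitehomological}: there is no splitting of $\GF_{\bbQ}$ to shortcut through.)

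The principal obstacle is the equivalence $\RUrat\simeq\operatorname{holim}_{n}\mathrm{CoInd}_{C_{n}}(\bbQ[\Out(C_{n})])$, i.e.\ checking that the $\Hom$-isomorphism is the degree-zero shadow of an honest quasi-isomorphism of derived mapping complexes. Beyond the formal facts about $\widetilde{\Phi}^{C_{n}}$ and $\mathrm{CoInd}_{C_{n}}$, this needs the base-change formula for $\widetilde{\Phi}^{C_{m}}\circ\mathrm{CoInd}_{C_{n}}$ --- which pins down the transition maps of the homotopy-limit diagram and shows the comparison becomes an equivalence after each $\widetilde{\Phi}^{C_{m}}$ --- together with the bookkeeping matching the resulting totalisation with $R\varprojlim$ over $(\bbN,\mid)$. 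By contrast, the $\Hom$-level statement is essentially a repackaging of the rational Artin induction theorem, and once the homotopy-limit identification is in place the passage to $\Ext^{\ast}$ is formal.
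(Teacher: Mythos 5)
Your plan shares its skeleton with the paper's (divide out transfers, restrict to cyclic groups, apply the normal basis theorem, organize the answer as an inverse limit over $(\bbN,\mid)$), but takes a different route for the $\Ext$-upgrade: you propose to exhibit $\RUrat$ as a homotopy limit of coinduced injectives and map out, whereas the paper works in the equivalent category $\Outopmod_\bbQ$, identifies $\tau(\RUrat)\cong\bbQ[\Out(-)]$, proves $\Hom(X,\bbQ[\Out(-)])\cong\varprojlim X^{\vee}$ directly, and upgrades to $\Ext$ using that the exact functor $(-)^{\vee}$ carries enough projectives to injectives. However, there are two concrete problems with your argument. The surjectivity step via class functions contains an error: you assert that ``the $\phi_n$ are $\Out(C_n)$-invariant,'' but an element of $\varprojlim_{(\bbN,\mid)}F(C_n)^{\vee}_{\tr}$ is just a family of \emph{arbitrary} linear forms compatible with the preferred restrictions $p_{m,n}^{\ast}$ --- indeed your own identification of $\Hom_{\bbQ[\Out(C_n)]}(\widetilde{\Phi}^{C_n}F,\bbQ[\Out(C_n)])$ with the full linear dual $F(C_n)^{\vee}_{\tr}$ via self-duality of the group algebra shows these need not be invariant. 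Without invariance, the class function $g\mapsto\phi_{\mathrm{ord}(g)}(\res^{G}_{\langle g\rangle}x)$ need not lie in $\RU(G)_{\bbQ}$: already on $G=C_3$ a $\bbQ$-valued class function is a rational virtual character only if it is constant on Galois orbits of group elements, and your formula violates this for non-invariant $\phi_3$. The correct inverse must use the whole Galois orbit $\{\phi_n(\sigma^{\ast}\,\cdot\,)\}_{\sigma\in\Out(C_n)}$, i.e.\ reconstruct the equivariant map into the regular representation from its identity component.

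The second gap is the one you yourself flag as the principal obstacle: the equivalence $\RUrat\simeq\operatorname{holim}_{n}\mathrm{CoInd}_{C_n}(\bbQ[\Out(C_n)])$. Even to \emph{write down} the comparison map one needs a coherent family of $\Out(C_n)$-equivariant isomorphisms $\widetilde{\Phi}^{C_n}\RUrat\cong\bbQ[\Out(C_n)]$ compatible with the restriction maps as $n$ varies. This coherence is exactly the non-formal heart of the theorem, established in the paper by Lemma \ref{lemma:outassemble} and Proposition \ref{prop:outru} via the explicit normal basis generators $\zeta_{p^k}+\zeta_{p^k}^{p}+\cdots+\zeta_{p^k}^{p^{k-1}}$ and their multiplicative extension across coprime orders. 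A groupwise ``base-change formula'' for $\widetilde{\Phi}^{C_m}\circ\mathrm{CoInd}_{C_n}$ gives abstractly isomorphic outputs but does not produce the compatible choices; the normal basis theorem alone only gives an isomorphism for each $n$ separately. So your approach does not circumvent that computation, and additionally one should note that since the $\widetilde{\Phi}^{G}$ are left adjoints, their commutation with $R\varprojlim$ is not automatic and needs the observation that this $R\varprojlim$ reduces (by cofinality of the factorials) to products and cokernels, with which evaluation does commute.
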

\begin{Remark}
The universal linear forms $\phi_n:\RU(C_n)\otimes\bbQ\ra \bbQ$ are determined by an explicit choice of compatible bases
for the representation rings of cyclic $p$-groups (Proposition \ref{prop:outru}) and the multiplicative relation
$\phi_{nm}=\phi_n\cdot\phi_m$ for coprime integers $n$ and $m$ (Lemma \ref{lemma:outassemble}).

\end{Remark}

By cofinality we will deduce the following result:
\begin{Corollary}\label{cor:dacorollary}
For any global functor $F$ the higher $\Ext$-groups 
\[
	\Ext^\ast_{\GF}(F,\RUrat)=0, \quad n\geq 2
\]
vanish. Addtionally, the $\Ext$-algebra of $\RUrat$ vanishes in all positive degrees:
\[
	\Ext^1_{\GF}(\RUrat,\RUrat)=0.
\]
\end{Corollary}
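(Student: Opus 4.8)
The plan is to feed Theorem~\ref{thm:datheorem} into standard facts about derived inverse limits over the poset $(\bbN,\mid)$; since that theorem does the real work, the corollary is essentially formal.

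\emph{Reduction to $\GF_\bbQ$.} First I would record that, because $\RUrat$ is a rational global functor, the inclusion $\GF_\bbQ\hookrightarrow\GF$ is exact and has the exact left adjoint $-\tensor\bbQ$, which therefore preserves projectives. Hence for every global functor $F$ we get $\Ext^\ast_{\GF}(F,\RUrat)\cong\Ext^\ast_{\GF_\bbQ}(F\tensor\bbQ,\RUrat)$ (apply $\Hom_{\GF_\bbQ}(-,\RUrat)=\Hom_{\GF}(-,\RUrat)$ to a projective resolution of $F$ in $\GF$ and rationalize it). Since $(F\tensor\bbQ)(C_n)^\vee_{\tr}=F(C_n)^\vee_{\tr}$, Theorem~\ref{thm:datheorem} identifies all the $\Ext$-groups in question with the derived limits $\varprojlim^\ast_{n\in(\bbN,\mid)}(F(C_n)^\vee_{\tr})$.

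\emph{Vanishing in degrees $\geq 2$.} The poset $(\bbN,\mid)$ is countable and directed, since any finite family of integers is bounded above by its least common multiple; concretely the chain $(k!)_{k\ge1}$ is cofinal. By cofinality the derived inverse limit over $(\bbN,\mid)$ coincides with the derived inverse limit of the associated tower, and $\varprojlim^n$ of a tower of abelian groups vanishes for $n\geq 2$ (equivalently, a countable directed poset has $\varprojlim$-cohomological dimension at most $1$). Therefore $\Ext^n_{\GF}(F,\RUrat)=0$ for all $F$ and all $n\geq 2$.

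\emph{The group $\Ext^1_{\GF}(\RUrat,\RUrat)$.} By the same identification this equals $\varprojlim^1_{n\in(\bbN,\mid)}(\RU(C_n)^\vee_{\tr})$, which via the cofinal chain above is the $\varprojlim^1$ of a tower. The key observation is that this tower is Mittag--Leffler: each $\RU(C_n)$ is a finitely generated abelian group, so $\RU(C_n)^\vee_{\tr}\subseteq\Hom_\bbZ(\RU(C_n),\bbQ)$ is a finite-dimensional $\bbQ$-vector space, and in a tower of finite-dimensional vector spaces the images $\operatorname{im}\bigl(\RU(C_m)^\vee_{\tr}\to\RU(C_n)^\vee_{\tr}\bigr)$ form a descending chain of subspaces of a finite-dimensional space and hence stabilize. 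Thus $\varprojlim^1=0$, and combined with the previous paragraph (for $F=\RUrat$) we conclude $\Ext^{>0}_{\GF}(\RUrat,\RUrat)=0$, i.e.\ the $\Ext$-algebra of $\RUrat$ is concentrated in degree $0$. No step here is a serious obstacle; the only points requiring care are checking that rationalization preserves projectives (so the reduction to $\GF_\bbQ$ is legitimate) and the comparison of derived limits along the cofinal chain together with the elementary Mittag--Leffler argument. It is also worth noting that finite generation of $\RU(C_n)$ is used essentially for the degree-$1$ statement: for a general $F$ the group $F(C_n)^\vee_{\tr}$ need not be finite-dimensional, the tower need not be Mittag--Leffler, and $\Ext^1_{\GF}(F,\RUrat)$ need not vanish.
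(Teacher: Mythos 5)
Your proof is correct, and the first two steps (explicit reduction to $\GF_\bbQ$ and cofinality of the factorial chain forcing $\varprojlim^n=0$ for $n\geq 2$) match the paper's argument; the paper subsumes the rationalization step into the proof of Theorem~\ref{thm:datheorem}, so your spelling it out is just filling in a detail the paper treats as obvious. Where you genuinely diverge is the $\Ext^1$ computation. You argue that each $\RU(C_n)^\vee_{\tr}$ is a finite-dimensional $\bbQ$-vector space, so the images along the tower form descending chains of subspaces that must stabilize, giving the Mittag--Leffler condition. The paper instead uses the structural identification $\tau(\RUrat)\cong\bbQ[\Out(-)]$ from Proposition~\ref{prop:outru}: the restriction maps of $\bbQ[\Out(-)]$ are defined by summation over fibers of surjections and are therefore injective, so dualizing produces a tower with genuinely surjective structure maps, which is Mittag--Leffler for a stronger, more immediate reason. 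Your route is more elementary and self-contained (it does not invoke the normal-basis identification), at the cost of relying on the fact that $\RU(C_n)$ is finitely generated; the paper's route is more structural and shows exactly which feature of $\RU$ is responsible. Your closing remark that finite-dimensionality is essential and that $\Ext^1_{\GF}(F,\RUrat)$ can be nonzero for general $F$ is a correct and worthwhile observation that the paper does not make explicit.
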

\begin{Remark}
The same is true for the real representation ring $\RO$ because it is a rational retract of $\RU$.
\end{Remark}
We briefly return to the original question. As mentioned above, there is a rational equivalence $\mathcal{GH}_\bbQ\simeq \mathcal D(\GF_\bbQ)$ between the global homotopy category and the derived category of global functors.
The homotopy groups of $\KU$ are isomorphic to the representation ring global functor $\RU$ in even degrees and trivial in odd degrees.
Since all $\Ext$-groups between these vanish in cohomological degrees larger than two, the associated chain complex is quasi-isomorphic to its homology.
Additionally, the first $\Ext$-groups are trivial and so there are no 'exotic' endomorphisms; that is, the selfmaps in the derived category biject with the graded morphisms between the homology groups.
Translating back to the topological side:
\begin{Theorem}\label{thm:daktheorythm}
The rational $(\fin)$-global unitary $K$-theory spectrum canonically splits as a sum of global Eilenberg-MacLane spectra:
\[
	\KU_{\bbQ}\simeq \bigvee_{n\in\mathbb Z}\Sigma^{2n}H(\RUrat)\text .
\]
\end{Theorem}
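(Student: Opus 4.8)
The plan is to transport the statement into the derived category of global functors via the rational algebraicity of global homotopy theory, and then to prove that the resulting chain complex is formal using the $\Ext$-vanishing of Corollary \ref{cor:dacorollary}. Concretely, I would apply the equivalence $\mathcal{GH}_\bbQ\simeq\mathcal D(\GF_\bbQ)$ of \cite[Thm 4.5.29]{global}, under which equivariant homotopy groups correspond to homology objects. Combined with Bott periodicity for global $K$-theory and the identification $\pi_0^G\KU\cong\RU(G)$ of \cite[6.4]{global}, the object $C\in\mathcal D(\GF_\bbQ)$ corresponding to $\KU_\bbQ$ has homology $H_{2n}(C)\cong\RUrat$ for every $n\in\bbZ$ and $H_{2n+1}(C)=0$. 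It then suffices to produce a canonical equivalence $C\simeq\bigoplus_{n\in\bbZ}\RUrat[2n]$ in $\mathcal D(\GF_\bbQ)$, where $[2n]$ places $\RUrat$ in homological degree $2n$; transporting this equivalence back across $\mathcal{GH}_\bbQ\simeq\mathcal D(\GF_\bbQ)$ then yields $\KU_\bbQ\simeq\bigvee_{n\in\bbZ}\Sigma^{2n}H(\RUrat)$.

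The formality would be established by a Postnikov/obstruction argument. From the truncation triangles
\[
H_{2k}(C)[2k]\lra\tau_{\leq 2k}C\lra\tau_{\leq 2k-2}C\xrightarrow{\ \kappa_k\ }H_{2k}(C)[2k+1]
\]
one shows inductively that the $k$-invariant $\kappa_k$ vanishes: granting $\tau_{\leq 2k-2}C\simeq\bigoplus_{n\leq k-1}\RUrat[2n]$, it lies in
\[
\Hom_{\mathcal D(\GF_\bbQ)}\!\Bigl(\,\bigoplus_{n\leq k-1}\RUrat[2n]\,,\ \RUrat[2k+1]\Bigr)\;\cong\;\bigoplus_{n\leq k-1}\Ext^{\,2k+1-2n}_{\GF_\bbQ}(\RUrat,\RUrat),
\]
and every exponent $2k+1-2n$ is $\geq 3$, so all summands vanish by Corollary \ref{cor:dacorollary}. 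Hence each triangle splits, $\tau_{\leq 2k}C\simeq\bigoplus_{n\leq k}\RUrat[2n]$, and assembling these partial splittings gives the sought equivalence on $C$.

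For canonicity I would fix the homology isomorphisms $H_{2n}(C)\cong\RUrat$ via Bott periodicity and observe that two splittings inducing these differ by an automorphism of $\bigoplus_n\RUrat[2n]$ that is the identity on homology. Such an automorphism has identity diagonal entries, and its off-diagonal components $\RUrat[2m]\to\RUrat[2n]$ lie in $\Ext^{2n-2m}_{\GF_\bbQ}(\RUrat,\RUrat)$, which vanishes for $n>m$ by Corollary \ref{cor:dacorollary} and for $n<m$ for trivial degree reasons; so the automorphism is the identity and the preferred splitting is unique. The additional vanishing $\Ext^1_{\GF_\bbQ}(\RUrat,\RUrat)=0$ moreover upgrades this to the statement that $\KU_\bbQ$ carries no exotic self-maps in $\mathcal{GH}_\bbQ$, so its endomorphisms match the graded morphisms of its homotopy.

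The main obstacle is the convergence of the Postnikov argument in the two-sided unbounded setting: since $\KU$ is $2$-periodic, $C$ is unbounded in both directions, so one must supply a base case for the downward induction and rule out $\varprojlim^1$-type phantom obstructions when passing to the (co)limit of the partial splittings. The cleanest way around this is to run the argument first for the connective cover — whose associated complex has homology $\RUrat$ in even degrees $\geq 0$ and vanishes below, so that the induction is genuinely bounded below and convergent, giving $\mathrm{ku}_\bbQ\simeq\bigoplus_{n\geq 0}\RUrat[2n]$ — and then to recover $\KU_\bbQ$ by inverting the Bott class, which exhibits it as a filtered colimit of shifts of $\mathrm{ku}_\bbQ$ and hence as $\bigoplus_{n\in\bbZ}\RUrat[2n]$. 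Checking that this inversion is compatible with the global structure and transports correctly across $\mathcal{GH}_\bbQ\simeq\mathcal D(\GF_\bbQ)$ is the final point to verify.
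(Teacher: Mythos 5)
Your proposal is correct and takes essentially the same route as the paper: transfer across $\mathcal{GH}_\bbQ\simeq\mathcal D(\GF_\bbQ)$, identify the homology as $\RUrat$ in even degrees, deduce formality from the $\Ext$-vanishing of Corollary \ref{cor:dacorollary}, and use $\Ext^1=0$ for canonicity. The paper compresses the formality step into a single sentence; you have supplied the Postnikov/$k$-invariant details and, in addition, correctly flagged the convergence issue in the doubly-unbounded setting that the paper passes over in silence, offering a valid fix via the connective cover and Bott inversion. (Two minor points: $\Hom$ out of a direct sum should be a product, not a sum — though this does not affect the vanishing — and an alternative, shorter fix for the convergence issue is to split off the \emph{bottom} class $\RUrat[2n]$ from each bounded-below truncation $\tau_{\geq 2n}C$ separately and then sum the resulting maps, which avoids the downward induction altogether; here one uses that $\GF_\bbQ$ is a Grothendieck category with exact products, so Postnikov towers of bounded-below complexes converge.)
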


\subsection*{Organization}
The rest of this paper is devoted to the proof of Theorem \ref{thm:datheorem}.
In Section \ref{sec:outop} we replace the category of rational global functors with the equivalent but less complicated category of $\Outop$-modules.
Cyclic groups play an important role and we give a more economical description of $\Outop$-modules over these.
Section \ref{sec:ruout} is concerned with the identification of the representation ring in a form more suited to computing maps into it.
In the last section we reduce the computation to cyclic groups and put everything together to show Theorem \ref{thm:datheorem} and Corollary \ref{cor:dacorollary}.
\subsection*{Acknowledgements}
These results were obtained as part of my thesis written under the supervision of Stefan Schwede.
I would like to thank him and Markus Hausmann for helpful discussions related to this material. I also would like to thank Emanuele Dotto for reading a draft version.

\section{Global functors and \texorpdfstring{$\Outop$}{Outop}-modules}\label{sec:outop}

We start with a brief review of the definition of $(\fin)$-global functors:
They are indexed by the pre-additive \emph{$(\fin)$-global Burnside category} $\bbA$. Its objects are finite groups and the abelian group of morphisms $\bbA(G,K)$ is the Grothendieck group on isomorphism classes of finite $G$-free $(K,G)$-bisets with disjoint union as addition.
Composition is induced by the balanced product of finite bisets:
\[
\bbA(K,L)\times \bbA(G, K)\ra \bbA(G,L), \quad ([N], [M])\mapsto [N\times_K M]
\]

\begin{Definition}\label{def:globalfunctor}
The abelian category of \emph{global functors}, denoted by $\GF$,
is the category of additive functors from the Burnside category $\bbA$ to abelian groups.
\end{Definition}

More explicitly, a global functor $F$ consists of abelian groups $F(G)$ together with the following structure maps:
\begin{itemize}
\item For every subgroup inclusion $H\leq G$, a \emph{transfer map} $\tr_H^G:F(H)\ra F(G)$.
\item For every homomorphism $\alpha:K\ra G$, a \emph{restriction map} $\alpha^\ast:F(G)\ra F(K)$.
If $\alpha$ is surjective, we will also refer to this as an \emph{inflation map}.
\end{itemize}
These correspond to the distinguished morphisms $\tr_H^G=[G]\in \bbA(H,G)$ and $\res_{\alpha}=[\alpha^\ast G]\in\bbA(G,K)$ in the Burnside category. 
The definition in terms of bisets efficiently encodes the relations between these, which we do not spell out here.
\begin{Remark}
In \cite{global}, a more 'tautological' definition (from the homotopy theoretic point of view) of the global Burnside category is given in the generality of compact Lie groups.
Namely, the abelian group $\bbA(G,K)$ is defined to be the set of natural transformations $\operatorname{Nat}(\pi_0^G,\pi_0^K)$
between the equivariant homotopy groups of global homotopy types. There is a natural comparison functor sending transfers and restrictions to their topological counterparts and it is a computational fact that these definitions agree on finite groups. % (also see [Adams] \textbf{ref?} for a generalized transfer construction).
\end{Remark}

\begin{Example}
The complex representation rings $\RU(G)$ of finite groups $G$ form a global functor $\RU$ with structure maps
\[
	\bbA(G,K)\times\RU(G)\rightarrow \RU(K),\quad ([M], V)\mapsto \bbC\{M\}\tensor_GV,
\]
where $\bbC\{-\}$ is the $\bbC$-linearization of sets.
In particular, transfers are given by induction of representations and restrictions by restricting the group action.
We write $\RUrat$ for the rationalized functor with values $\RU(G)\tensor\bbQ$.
\end{Example}

\begin{Definition}
Let $\Out$ be the category of finite groups and conjugacy classes of surjective homomorphisms.
We denote by $\Outopmod$ the abelian category of contravariant functors from $\Out$ to abelian groups.
\end{Definition}
Rationally, the category of global functors can be greatly simplified.
Dividing out transfers from proper subgroups defines a functor
\[
	\tau:\GF\ra \Outopmod
\]
to the category of $\Outop$-modules. For a global functor $F$, the value of $\tau F$ at a finite group $G$ is thus given by
\[
	(\tau F)(G)=F(G)/\left(\sum_{H<G} \tr_H^G(F(H))\right)
\]
and the inflations of $F$ pass to the quotients since they commute with transfers.
\begin{Theorem}[{\cite[Thm 4.5.35]{global}}]
The above functor restricts to an equivalence
\[
	\tau:\GF_\bbQ\overset{\simeq}{\lra}\Outopmod_\bbQ
\]
between rational global functors and rational $\Outop$-modules.
\end{Theorem}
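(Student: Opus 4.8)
The plan is to show that, after rationalization, $\tau$ is fully faithful and essentially surjective; exactness then comes for free. Everything rests on a concrete pointwise description of $\tau$ via Burnside ring idempotents, which I would set up first.

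For each finite group $G$ the rational Burnside ring $A(G)_\bbQ$ acts naturally on $F(G)$ for every global functor $F$ (through the underlying $G$-Mackey functor), with $[G/K]$ acting as $\tr_K^G\circ\res^G_K$. Let $e_H^G\in A(G)_\bbQ$ denote the primitive idempotent at the conjugacy class of $H\le G$. A short computation with the marks of these idempotents gives $\sum_{H<G}\tr_H^G\bigl(F(H)\bigr)=(1-e_G^G)\cdot F(G)$, so $(\tau F)(G)\cong e_G^G\cdot F(G)$ is naturally a direct summand of $F(G)$; in particular $\tau$ is additive and exact. Feeding this into the classical rational structure theory of Mackey functors (Th\'evenaz--Webb, tom Dieck), one obtains for each $G$ a natural decomposition
\[
	F(G)\;\cong\;\bigoplus_{(H\le G)}\bigl(e_H^H\cdot F(H)\bigr)_{W_G(H)}\;=\;\bigoplus_{(H\le G)}\bigl((\tau F)(H)\bigr)_{W_G(H)},
\]
where $W_G(H)=N_G(H)/H$ acts on $(\tau F)(H)$ through the homomorphism $W_G(H)\to\Out(H)$ and the $\Out^{\op}$-module structure of $\tau F$.

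Faithfulness is then immediate: a morphism $f\colon F\to F'$ commutes with all transfers, restrictions and conjugations, hence with the $A(G)_\bbQ$-actions, so its component on the $(H)$-summand of the displayed decomposition is $(\tau f)(H)_{W_G(H)}$; thus $\tau f=0$ forces $f=0$. For fullness, given a map $g\colon\tau F\to\tau F'$ of $\Out^{\op}$-modules each $g(H)$ is in particular $\Out(H)$-equivariant, so $f(G):=\bigoplus_{(H\le G)}g(H)_{W_G(H)}$ is a candidate lift; that $f$ commutes with transfers and restrictions is the Mackey structure theorem applied fibrewise, and the one new point is compatibility with an inflation $\alpha^\ast\colon F(G)\to F(K)$ along a surjection $\alpha\colon K\twoheadrightarrow G$. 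The relevant fact is that $\res^K_L\circ\alpha^\ast$ equals restriction to $\alpha(L)\le G$ followed by inflation along $L\twoheadrightarrow\alpha(L)$, so in the idempotent decompositions $\alpha^\ast$ carries the $(H)$-summand into the sum of the $(L)$-summands with $\alpha(L)$ conjugate to $H$, and the matrix entries are built solely from conjugations and from the inflation maps of the $\Out^{\op}$-module $\tau F$; hence $f$ commutes with $\alpha^\ast$ precisely because $g$ does. The same analysis delivers essential surjectivity: for $M\in\Outopmod_\bbQ$ define $(\sigma M)(G):=\bigoplus_{(H\le G)}M(H)_{W_G(H)}$ (with $M(H)$ a $W_G(H)$-module via $W_G(H)\to\Out(H)$), equip it with the transfers and restrictions supplied by the Mackey structure theorem and with the inflations forced by the formulas just described; then $\tau\sigma M$ is the $H=G$ summand $M(G)_{W_G(G)}=M(G)$ naturally in $M$, while $\sigma\tau F\cong F$ is exactly the displayed decomposition --- both identifications respecting all structure maps.

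The main obstacle is the verification that this $\sigma M$ really is a global functor: one must check that the inflations defined on $\bigoplus_{(H\le G)}M(H)_{W_G(H)}$ are well defined and satisfy every relation of the global Burnside category together with the transfers and restrictions. This reduces to bookkeeping with double cosets and with the behaviour of the idempotents $e_H^G$ under inflation, the key input being that for a surjection $\alpha\colon K\twoheadrightarrow G$ one has $\alpha^\ast(e_H^G)=\sum e_L^K$, summed over conjugacy classes of subgroups $L\le K$ with $\alpha(L)$ conjugate to $H$, together with the identification $\alpha^{-1}(H)/\ker\alpha\cong H$; these ensure that the ``error terms'' in $\alpha^\ast(e_H^G)$ beyond $e_{\alpha^{-1}(H)}^K$ live on strictly smaller summands and are absorbed correctly. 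Granting this, $\tau$ is fully faithful and essentially surjective, hence an equivalence, with quasi-inverse $\sigma$.
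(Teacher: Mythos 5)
The paper does not actually prove this theorem: it is quoted from \cite[Thm 4.5.35]{global}, so there is no in-text proof to compare against. That said, your sketch follows what is essentially the standard (and, as far as I recall, Schwede's) route: use the rational Burnside ring idempotents to identify $(\tau F)(G)$ with the top summand $e_G^G\cdot F(G)$, invoke the rational Mackey-functor structure theorem to reassemble $F(G)$ from the pieces $\{(\tau F)(H)\}_{H\le G}$, and then push the inflation data through this decomposition to get full faithfulness and a quasi-inverse $\sigma$. Your identification $\sum_{H<G}\tr_H^G F(H)=(1-e_G^G)F(G)$ and the mark computation $\alpha^\ast(e_H^G)=\sum_{(L):\,\alpha(L)\sim H}e_L^K$ are both correct, and the $W_G(H)\to\Out(H)$ map is the right one to feed the $\Out^{\op}$-structure into the $W_G(H)$-(co)invariants.

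The one thing to flag is that the hard content is exactly where you say ``this reduces to bookkeeping'': verifying that the inflations induced on $\sigma M=\bigoplus_{(H\le G)}M(H)_{W_G(H)}$ are well defined, that they satisfy the biset relations in $\bbA$ together with the Thévenaz--Webb transfers/restrictions, and that the resulting unit and counit maps are genuinely isomorphisms of global functors (not just objectwise). In particular, the matrix entry of $\alpha^\ast$ from the $(H)$-block to an $(L)$-block with $\alpha(L)\sim H$ is, after choosing a conjugating element, the inflation along $L\twoheadrightarrow\alpha(L)$, and one must check this is independent of choices and compatible with the $W$-(co)invariants on both sides. None of this is conceptually problematic, but it is the bulk of the proof rather than a footnote, and a careful write-up would need to carry it out; as a blind reconstruction of the cited argument your outline is sound.
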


%\numberwithin{equation}{Corollary}

\begin{Definition}\label{def:outcyc}
For every $n\geq1$ we fix a cyclic group of order $n$ with a chosen generator $\tau_n$ and define $\Outopcyc\subset\Out$ be the full subcategory on these. For definiteness we take $C_n\subset \bbC^\times$ to be the $n$-th roots of unity and $\tau_n=e^{2\pi i/n}$.
\end{Definition}
There are preferred projections 
\begin{equation}\label{eq:daeqn}
p_{m,n}:C_{m}\twoheadrightarrow C_n,\quad \tau_m\mapsto\tau_n
\end{equation}
for all integers $n,m\geq 1$ such that $n$ divides $m$. Under the identification $C_k\cong \bbZ/k\bbZ$ they correspond to the unique surjections of rings,
which also restrict to surjections on units. The canonical isomorphisms
\(
(\bbZ/k\bbZ)^{\times}\cong\Out(C_k)=\Aut(C_k),\ l\mapsto (x\mapsto x^l)
\)
then provide surjections $(p_{m,n})_!:\Out(C_m)\twoheadrightarrow\Out(C_n)$ on automorphisms groups such that the square
\begin{equation}\label{eq:dasquare}
	\begin{tikzcd}
		C_m	\arrow[r, "p_{m,n}"]\arrow[d, "\phi"]	&	C_n\arrow[d, "p_!(\phi)"]	\\
		C_m	\arrow[r, "p_{m,n}"]				&	C_n
	\end{tikzcd}
\end{equation}
commutes for all $\phi\in \Out(C_m)$.
Furthermore, we note that any two epimorphisms between the same cyclic groups differ by a unique automorphism in the target. This implies the following description of $\Outopcyc$-modules:
\begin{Proposition}
An $\Outopcyc$-module $X$ is uniquely specified by the following data:
\begin{itemize}
\item
The collection of $\Out(C_n)$-modules $X(C_n)$ for $n\geq 1$.
\item
The collection of restriction maps $p_{m,n}^\ast:X(C_n)\rightarrow X(C_m)$ associated with the preferred projections $C_m\twoheadrightarrow C_n$ for $n\mid m$.
\end{itemize}
The restrictions $p_{m,n}^\ast$ have to be compatible with composition and $\Out(C_m)$-equivariant, where $\Out(C_m)$ acts on $X(C_n)$ via the canonical map
$\Out(C_m)\rightarrow\Out(C_n)$.

Similarly, a morphism $f:X\rightarrow Y$ of $\Outopcyc$-modules consists of $\Out(C_n)$-equivariant maps $f(C_n):X(C_n)\rightarrow Y(C_n)$ for $n\geq 1$, which commute with the distinguished restrictions.
\end{Proposition}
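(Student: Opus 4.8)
The plan is to unpack the definition of a contravariant functor on $\Outopcyc$; essentially all the work is already contained in the description of the morphism sets preceding the statement. First I would record the structure of these morphism sets: since all groups involved are cyclic, conjugacy is vacuous and a morphism $C_m\to C_n$ in $\Outopcyc$ is just a surjection, which exists if and only if $n\mid m$; when it does, post-composition makes the set of surjections $C_m\twoheadrightarrow C_n$ a torsor under $\Out(C_n)=\Aut(C_n)$ with distinguished point $p_{m,n}$, by the observation recalled above that any two such surjections differ by a unique automorphism of the target. Hence every surjection factors uniquely as $\alpha=\psi_\alpha\circ p_{m,n}$ with $\psi_\alpha\in\Out(C_n)$, and composition of such morphisms is pinned down by exactly three facts: the group laws inside each $\Out(C_n)$; the relation $p_{n,\ell}\circ p_{m,n}=p_{m,\ell}$ (both sides send $\tau_m$ to $\tau_\ell$, cf.\ \eqref{eq:daeqn}); and the commuting square \eqref{eq:dasquare}, i.e.\ $p_{m,n}\circ\phi=(p_{m,n})_!(\phi)\circ p_{m,n}$ for $\phi\in\Out(C_m)$. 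Put differently, the $\Out(C_n)$ and the $p_{m,n}$ generate $\Outopcyc$ subject to exactly these relations.

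Granting this, the first correspondence is immediate in both directions. An $\Outopcyc$-module $X$ restricts to the asserted data: the $X(C_n)$ are $\Out(C_n)$-modules by functoriality on automorphisms, the $p_{m,n}^\ast$ are compatible with composition by functoriality applied to $p_{n,\ell}\circ p_{m,n}=p_{m,\ell}$, and each $p_{m,n}^\ast$ is $\Out(C_m)$-equivariant because applying $(-)^\ast$ to the square \eqref{eq:dasquare} is exactly the identity $\phi^\ast\circ p_{m,n}^\ast=p_{m,n}^\ast\circ((p_{m,n})_!(\phi))^\ast$. Conversely, from such data I would reconstruct a functor: it takes the prescribed values on objects, automorphisms act via the module structure, and a general surjection is sent to $\alpha^\ast:=p_{m,n}^\ast\circ\psi_\alpha^\ast\colon X(C_n)\to X(C_n)\to X(C_m)$ through its unique factorization. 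Well-definedness is automatic since $\psi_\alpha$ is unique, $\Id^\ast=\Id$ is clear, and for a composable pair $C_m\twoheadrightarrow C_n\twoheadrightarrow C_\ell$ one reads off from the three facts above that $\psi_{\beta\alpha}=\psi_\beta\cdot(p_{n,\ell})_!(\psi_\alpha)$; substituting, the functoriality identity $(\beta\alpha)^\ast=\alpha^\ast\circ\beta^\ast$ reduces precisely to the compatibility $p_{m,n}^\ast\circ p_{n,\ell}^\ast=p_{m,\ell}^\ast$ and the $\Out(C_n)$-equivariance of $p_{n,\ell}^\ast$. The two constructions are mutually inverse because $\alpha=\psi_\alpha\circ p_{m,n}$ forces $\alpha^\ast=p_{m,n}^\ast\circ\psi_\alpha^\ast$.

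The same reduction disposes of morphisms. A natural transformation $f\colon X\to Y$ is in particular natural against automorphisms, hence $\Out(C_n)$-equivariant, and against the $p_{m,n}$, hence commutes with the distinguished restrictions; conversely an $f$ with these two properties is automatically natural against an arbitrary surjection, since $f(C_m)\circ\alpha^\ast=f(C_m)\circ p_{m,n}^\ast\circ\psi_\alpha^\ast=p_{m,n}^\ast\circ f(C_n)\circ\psi_\alpha^\ast=p_{m,n}^\ast\circ\psi_\alpha^\ast\circ f(C_n)=\alpha^\ast\circ f(C_n)$. I do not expect any serious obstacle here: the result is a formal consequence of the morphism-set description, and the only point that genuinely needs care is keeping variance conventions consistent when turning functoriality for composites of non-preferred surjections into the equivariance hypothesis via \eqref{eq:dasquare} — which is painless in this case because all the groups $\Out(C_n)=\Aut(C_n)=(\bbZ/n\bbZ)^\times$ are abelian, so one need not distinguish left from right actions.
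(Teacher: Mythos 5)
Your proof is correct and is exactly the argument the paper has in mind: the paper states the proposition as an immediate consequence of the observation that any two epimorphisms between the same cyclic groups differ by a unique target automorphism, and you spell out the torsor/unique-factorization bookkeeping that makes that implication precise. The paper gives no separate proof, so there is nothing to compare beyond noting that you have filled in the intended routine verification.
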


\section{The \texorpdfstring{$\Outop$}{Outop}-module associated with the global functor \texorpdfstring{$\RUrat$}{RUQ}}\label{sec:ruout}
%Our example of interest is the complex representation ring functor and we recall the identification of $\tau(\RUrat)$ from {\cite[IV.6.12]{global}}.
Our goal in this section is to describe $\tau(\RUrat)$ in a form suitable for the later calculation of $\Ext$-groups.
We start by reviewing its identification in terms of cyclotomic field extensions, which appeared in an earlier version of \cite{global}.
By Artin's theorem (see e.g.\ \cite[II.9 Thm.\ 17]{serre}), every virtual representation is rationally induced from cyclic subgroups.
Since transfers in $\RUrat$ are given by induction of representations, $\tau(\RUrat)$ vanishes at non-cyclic groups. 
Let $X$ be the one-dimensional tautological representation of $C_n$. Then the value at the cyclic group $C_n$ of order $n$ is given by
\[
	\RUrat(C_n)=\bbQ[X]/(X^n-1)\text .
\]
Over the cyclic $p$-group $C_{p^k}$ a calculation with characters shows that dividing out transfers corresponds to dividing out the ideal
$(1+X^{p^{k-1}}+ \cdots + X^{(p-1)\cdot p^{k-1}})$
generated by the minimal polynomial  of the primitive $p^k$-th roots.
Hence we can identify
\[
	\tau(\RUrat)(C_{p^k})\cong \bbQ (\zeta_{p^k})
\]
as a cyclotomic field extension for all primes $p$ and non-negative integers $k$.
The group of automorphisms $\Out(C_{p^k})\cong(\mathbb{Z}/p^k\mathbb{Z})^{\times}$ acts as the Galois group
and restriction along the projection $C_{p^k}\twoheadrightarrow C_{p^{k-1}}$ is given by sending $\zeta_{p^{k-1}}$ to $\zeta_{p^k}^{p}$.

We recall that via the tensor product of representations the representation ring of a product of two groups is identified with the tensor product of the individual representation rings.
For coprime integers $n$ and $m$, the Chinese remainder theorem then implies that the canonical map of commutative rings induced by the two preferred restrictions
\[
	p_{nm,n}^\ast\cdot p_{nm,m}^\ast:\RUrat(C_n)\tensor\RUrat(C_m)\overset{\cong}{\lra}\RUrat(C_{nm})
\]
is an isomorphism. Since the proper transfers form an ideal, the objectwise ring structure in $\RUrat$
passes to the quotient and the restrictions of $\tau(\RUrat)$ become maps of commutative rings. A transfer in one of the tensor factors corresponds to a transfer from a subgroup of the form
$H\times C_m$ or $C_n\times H$ for a proper subgroup $H$. The groups of this form contain all maximal subgroups of the product because $n$ and $m$ are coprime.
Hence the induced multiplication map
\[
	\tau(\RUrat)(C_n)\tensor\tau(\RUrat)(C_m)\cong\tau(\RUrat)(C_{nm})
\]
is again an isomorphism. 
\begin{Remark}
The previous isomorphism allows us to identify
\[
	\tau(\RUrat)(C_n)\cong \bbQ (\zeta_n)
\]
as a cyclotomic field extension for all non-negative integers $n$ with $\Out(C_n)$ again acting as the Galois group.
However, we will only need to know this for cyclic $p$-groups.
\end{Remark}

To proceed we need a more convenient description of $\tau(\RUrat)$.
The Normal Basis Theorem in Galois theory states that as a representation of the Galois group, the extension field in a finite Galois extension is isomorphic to the regular representation over the subfield. In the case of cyclotomic field extensions these identifications can be made compatible
with the $\Outopcyc$-functoriality. 
\begin{Definition}
We denote by $\bbQ[\Out (-)]$ the $\Outopcyc$-module consisting of the collection of regular representations.
Its restriction maps
\[
	p^{\ast}:\bbQ[\Out(C_n)]\ra\bbQ[\Out(C_{nk})]
\]
are obtained from the surjections $\Out(C_{nk})\twoheadrightarrow \Out(C_n)$ (\ref{eq:dasquare}) by summation over the fibers,
%Since $\Out(C_{nk})$ acts via this on $\bbQ[\Out(C_n)]$, this map is automatically equivariant.
which is automatically $\Out(C_{nk})$-equivariant.
In additive notation, it sends a basis element $ j\in  (\bbZ/n\bbZ)^\times\cong\Out(C_n)$ to the sum over all basis elements $\tilde j$ such that $\tilde j\in (\bbZ/nk\bbZ)^\times$ reduces to $j$ mod $n$.  %, i.e.\ it sends a basis element $\phi\in\Out(C_n)$ to
%\begin{equation*}
%p^{\ast}(\phi)=\sum_{\tilde{\phi}\in p^{-1}(\phi)}\tilde{\phi}\text{.}
%\end{equation*}
\end{Definition}
Since an equivariant map out of the regular representation is uniquely determined by its value on the identity element,
a morphism $\bbQ[\Out(-)]\rightarrow X$ of $\Outopcyc$-modules is classified by elements $x_n\in X(C_n)$ (subject to the relations expressing naturality).
In the following lemma $\Outopcycp$ denotes the full subcategory on cyclic $p$-groups.
\begin{Lemma}\label{lemma:outassemble}
Let $R$ be a commutative $\Outopcyc$-ring \emph(i.e.\ an $\Outopcyc$-diagram of commutative rings\emph) and suppose that we are given for each prime $p$ a morphism
\(
	\phi^p:\bbQ[\Out(-)]\rightarrow R
\)
of the underlying $\Outopcycp$-modules, classified by elements $x_{p^k}\in R(C_{p^k})$ with $x_1=1$.
Then the $\phi^p$ extend to a morphism
\[
	\phi:\bbQ[\Out(-)]\rightarrow R
\]
of $\Outopcyc$-modules whose classifying elements are determined by the relation $x_{nm}=x_n\cdot x_m\in R(C_{nm})$ for coprime integers $n$ and $m$.

Moreover, if we start out with isomorphisms $\phi^p$ and the multiplication maps $R(C_m)\tensor R(C_n)\ra R(C_{nm})$ are also isomorphisms for coprime integers, then $\phi$ is an isomorphism.

\end{Lemma}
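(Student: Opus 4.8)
The plan is to build $\phi$ objectwise on each cyclic group $C_N$ by factoring $N$ into its prime-power components and multiplying together the maps $\phi^p$, then check that this assignment is compatible with the distinguished restrictions of $\bbQ[\Out(-)]$. First I would fix notation: for $N = \prod_p p^{k_p}$ (finite product over primes dividing $N$), the preferred projections $p_{N,p^{k_p}}: C_N \twoheadrightarrow C_{p^{k_p}}$ induce ring maps $R(C_{p^{k_p}}) \ra R(C_N)$ via the $\Outopcyc$-structure of $R$; write $x_N := \prod_p p_{N,p^{k_p}}^\ast(x_{p^{k_p}}) \in R(C_N)$, with the empty product giving $x_1 = 1$. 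Since a morphism $\bbQ[\Out(-)] \ra R$ of $\Outopcyc$-modules is the same as an assignment $N \mapsto x_N \in R(C_N)$ satisfying, for every $N \mid M$, the naturality relation $p_{M,N}^\ast(x_N) = \sum_{\tilde\jmath} p_!(\tilde\jmath)\cdot x_M$ where $\tilde\jmath$ ranges over lifts of $1 \in \Out(C_N)$ (equivalently, $p^\ast$ on $\bbQ[\Out(-)]$ sends the classifying element to the fiber sum), the task is exactly to verify this relation for the $x_N$ just defined. The multiplicativity relation $x_{nm} = x_n \cdot x_m$ for coprime $n,m$ is then immediate from the definition together with the fact that $p_{nm,n}^\ast$ and $p_{nm,m}^\ast$ followed by multiplication realize the prime-power decomposition of $C_{nm}$.

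The key reduction is that it suffices to check the naturality relation for $N \mid M$ in two basic cases: (i) $M = p^{k+1}$, $N = p^k$ — a single prime, which is precisely the hypothesis that $\phi^p$ is a morphism of $\Outopcycp$-modules, so there is nothing to prove; and (ii) $M = N\cdot q$ with $q$ a prime not dividing $N$ — adjoining a new prime. The general case follows by composing these, using that the poset $(\bbN, \mid)$ is generated under composition by the covering relations $N \mid Np$. For case (ii) the argument is: the fiber of $\Out(C_{Nq}) \twoheadrightarrow \Out(C_N)$ over $1$ is, via the Chinese remainder decomposition $\Out(C_{Nq}) \cong \Out(C_N)\times\Out(C_q)$, exactly $\{1\}\times\Out(C_q)$; so the fiber sum of the classifying element $x_N$ pulled back to level $Nq$ is $\sum_{\tilde\jmath \in \Out(C_q)} p_!(\tilde\jmath)\cdot (p_{Nq,N}^\ast x_N)$. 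On the other hand, because $\phi^q$ is a morphism of $\Outopcycp$-modules with $x_1 = 1$, the fiber sum over $\Out(C_q)$ of the classifying element of level $1$ equals $p_{q,1}^\ast(x_1) = x_1 = 1$ pulled back, i.e. $\sum_{\tilde\jmath \in \Out(C_q)} p_!(\tilde\jmath)\cdot 1 = 1$ in $R(C_q)$, and this transports to $R(C_{Nq})$; multiplying by $p_{Nq,N}^\ast(x_N)$ and using that everything is happening inside the commutative ring $R(C_{Nq})$ with the two factors coming from the two coprime coordinates gives precisely $p_{Nq,N}^\ast(x_N) = \sum_{\tilde\jmath} p_!(\tilde\jmath)\cdot x_{Nq}$, as desired.

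For the last assertion, suppose each $\phi^p$ is an isomorphism and the multiplication maps $R(C_m)\tensor R(C_n) \ra R(C_{nm})$ are isomorphisms for coprime $n,m$. Then at each $C_N$ the map $\phi(C_N): \bbQ[\Out(C_N)] \ra R(C_N)$ factors, via the Chinese remainder isomorphisms on both sides, as the tensor product $\bigotimes_p \phi^p(C_{p^{k_p}})$; a tensor product of isomorphisms of $\bbQ$-vector spaces is an isomorphism, so $\phi$ is an isomorphism objectwise, hence an isomorphism of $\Outopcyc$-modules. The main obstacle I anticipate is bookkeeping in the coprime case (ii): one must track carefully how the $\Out(C_{Nq})$-action, the fiber-sum description of $p^\ast$ on $\bbQ[\Out(-)]$, and the commutative ring structure of $R$ interact through the Chinese remainder splitting — in particular making sure the "new prime" fiber sum genuinely collapses using $x_1 = 1$ rather than just up to the ambient action. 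Once that single identity is pinned down, the inductive assembly over $(\bbN,\mid)$ and the isomorphism statement are formal.
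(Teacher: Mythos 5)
Your overall strategy—define $x_N := \prod_p p_{N,p^{k_p}}^\ast(x_{p^{k_p}})$ and verify the classifying-element naturality relation $p_{M,N}^\ast(x_N) = \sum_{\tilde\jmath}\tilde\jmath\cdot x_M$ (sum over $\tilde\jmath$ in the kernel of $\Out(C_M)\twoheadrightarrow\Out(C_N)$) only on covering relations—is viable: the relation does compose along chains in $(\bbN,\mid)$, and your CRT argument in case (ii) is correct modulo a notational slip (you want $\tilde\jmath\cdot x_M$ using the $\Out(C_M)$-action, not $p_!(\tilde\jmath)\cdot x_M$). However, there is a genuine gap in the case analysis. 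The covering relations of $(\bbN,\mid)$ are all pairs $N \mid Np$ with $p$ prime, and these split into \emph{three} types, not two: (i) $p^k \mid p^{k+1}$; (ii) $N \mid Nq$ with $q \nmid N$; and (iii) $N \mid Np$ with $p \mid N$ and $N$ \emph{not} a prime power, e.g.\ $6 \mid 12$. Type (iii) is neither (i) nor (ii) and cannot be reached by composing them—(i) only lives among pure prime powers and (ii) only adjoins fresh primes—so your assertion that ``the general case follows by composing these'' fails.

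The missing case does hold, but it needs its own argument: under CRT write $\Out(C_{Np})\cong\Out(C_{p^{a+1}})\times\Out(C_{N'})$ and $\Out(C_N)\cong\Out(C_{p^a})\times\Out(C_{N'})$ with $N = p^a N'$, $p\nmid N'$, $a\ge1$; the kernel of $\Out(C_{Np})\to\Out(C_N)$ is $\ker(\Out(C_{p^{a+1}})\to\Out(C_{p^a}))\times\{1\}$, which acts trivially on the $N'$-part of $x_{Np}$, so the fiber sum factors as $\bigl(\sum_\sigma \sigma\cdot p_{Np,p^{a+1}}^\ast(x_{p^{a+1}})\bigr)\cdot p_{Np,N'}^\ast(x_{N'})$; the first factor equals $p_{Np,p^a}^\ast(x_{p^a})$ by the $\phi^p$-naturality for $p^a\mid p^{a+1}$ (this is where you invoke the hypothesis nontrivially, rather than via $x_1=1$ as in case (ii)), and one recovers $p_{Np,N}^\ast(x_N)$. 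Alternatively, the paper sidesteps the covering-relation bookkeeping altogether: it builds the morphism inductively by tensoring, defining $\phi_{nm}$ for coprime $n,m$ as $\bbQ[\Out(C_{nm})]\cong\bbQ[\Out(C_n)]\otimes\bbQ[\Out(C_m)]\xrightarrow{\phi_n\otimes\phi_m}R(C_n)\otimes R(C_m)\to R(C_{nm})$, and compatibility with all restrictions then follows uniformly because both the restriction maps of $\bbQ[\Out(-)]$ and the multiplication maps of $R$ respect the CRT tensor decompositions. Your closing argument for the isomorphism statement is fine.
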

\begin{proof}
%We will now assemble these partial morphisms into a single isomorphism of $\Out_{cyc}^{op}$-modules. Let us abbreviate $R=\tau(\RUrat)$. 
Given coprime integers $n$ and $m$, suppose that we have already constructed partial morphisms for these, i.e.\ maps $\phi_k:\bbQ[\Out(C_k)]\ra R(C_k)$ for all $k$ dividing either $n$ or $m$ that commute with restrictions
and such that $\phi_1=1$ is the inclusion of the unit element in $R(C_1)$.
%We will only use that the restrictions of $R$ are maps of commutative rings and that the multiplication maps $R(C_n)\tensor R(C_m)\ra R(C_{nm})$ are bijective.
The morphism $\phi_{nm}$ is now defined as the composite 
\begin{equation*}
\bbQ[\Out(C_{nm})]\cong \bbQ[\Out(C_n)]\tensor\bbQ[\Out(C_m)]\xrightarrow{\phi_n\tensor\phi_m}R(C_n)\tensor R(C_m)\ra R(C_{nm})\text{,}
\end{equation*}
where the first map is induced by the canonical decomposition $\bbZ/nm\bbZ\cong \bbZ/n\bbZ\times \bbZ/m\bbZ$.
This definition does not depend on the order of $n$ and $m$ because of the commutativity of $R$. Furthermore the condition $\phi_1=1$ ensures that in the case $n=1$ or $m=1$ we recover the original map. Replacing $n$ and $m$ by one of their divisors respectively in the above composite then defines $\phi$ for all divisors of $nm$.
This is compatible with restrictions because the initially defined maps are and so we have defined a partial morphism for $nm$.
Clearly, $\phi_{nm}$ is an isomorphism under the additional hypothesis.

Finally, we remark that this construction is independent of the decomposition into a product of two coprime integers. Considering the prime factorization of an integer $n$, we see that $\phi_{n}$ is just given by the analogous construction for several tensor factors applied to the initially defined maps for prime powers.
\end{proof}

\begin{Proposition}\label{prop:outru}
There is an isomorphism of $\Out_{cyc}^{op}$-modules 
\[
	\bbQ[\Out (-)]\overset{\cong}{\lra} \tau(\RUrat)\text{,}
\]
which is classified by the elements $\zeta_{p^k}+\zeta_{p^k}^p+\cdots+ \zeta_{p^k}^{p^{k-1}} \in \bbQ(\zeta_{p^k})$.% $X=\zeta_{p^k}$.
\end{Proposition}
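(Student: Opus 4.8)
The plan is to reduce to cyclic $p$-groups using Lemma~\ref{lemma:outassemble} and then to settle the prime case by an explicit computation with cyclotomic fields. We apply that lemma with $R=\tau(\RUrat)$. The multiplication maps $\tau(\RUrat)(C_m)\tensor\tau(\RUrat)(C_n)\lra\tau(\RUrat)(C_{nm})$ are already known to be isomorphisms for coprime $m$ and $n$, so it is enough to construct, for each prime $p$, an \emph{iso}morphism of $\Outopcycp$-modules $\phi^p\colon\bbQ[\Out(-)]\to\tau(\RUrat)$ whose classifying elements are $x_{p^k}=\zeta_{p^k}+\zeta_{p^k}^p+\cdots+\zeta_{p^k}^{p^{k-1}}\in\bbQ(\zeta_{p^k})$ (with $x_1=1$); the second part of Lemma~\ref{lemma:outassemble} then assembles these into an isomorphism $\phi\colon\bbQ[\Out(-)]\to\tau(\RUrat)$ of $\Outopcyc$-modules.

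So fix a prime $p$ and let $\phi^p(C_{p^k})\colon\bbQ[\Out(C_{p^k})]\to\bbQ(\zeta_{p^k})$ be the unique $\Out(C_{p^k})$-equivariant map with $\phi^p(C_{p^k})(1)=x_{p^k}$; explicitly it sends a basis element $j\in(\bbZ/p^k\bbZ)^{\times}$ to $j\cdot x_{p^k}$, the action being through the identification of $\Out(C_{p^k})$ with the Galois group. It then remains to check that these maps are compatible with the restriction maps, so that $\phi^p$ is a morphism, and that each $\phi^p(C_{p^k})$ is bijective. Since every epimorphism of cyclic $p$-groups factors through the preferred projections, compatibility with restrictions reduces to compatibility with $p_{p^k,p^{k-1}}\colon C_{p^k}\twoheadrightarrow C_{p^{k-1}}$. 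Unwinding the definitions, where the restriction of $\bbQ[\Out(-)]$ is summation over the fibres of $\Out(C_{p^k})\twoheadrightarrow\Out(C_{p^{k-1}})$ and the restriction of $\tau(\RUrat)$ is the embedding $\bbQ(\zeta_{p^{k-1}})\hookrightarrow\bbQ(\zeta_{p^k})$, $\zeta_{p^{k-1}}\mapsto\zeta_{p^k}^p$, this comes down to a single trace identity $\mathrm{Tr}_{\bbQ(\zeta_{p^k})/\bbQ(\zeta_{p^{k-1}})}(x_{p^k})=\zeta_{p^k}^p+\zeta_{p^k}^{p^2}+\cdots+\zeta_{p^k}^{p^{k-1}}$ in $\bbQ(\zeta_{p^k})$. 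This is a direct orbit computation: the orbit of $\zeta_{p^k}$ under $\mathrm{Gal}(\bbQ(\zeta_{p^k})/\bbQ(\zeta_{p^{k-1}}))$ is free and its sum telescopes, $\sum_{l=0}^{p-1}\zeta_{p^k}^{1+lp^{k-1}}=\zeta_{p^k}(1+\zeta_p+\cdots+\zeta_p^{p-1})=0$, whereas each remaining summand $\zeta_{p^k}^{p^i}$ with $1\le i\le k-1$ is already fixed by this subgroup, and collecting terms yields the claim.

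For bijectivity, source and target are $\bbQ$-vector spaces of the same finite dimension $\varphi(p^k)=\#\Out(C_{p^k})$ at each $C_{p^k}$, so it suffices to prove that $\phi^p(C_{p^k})$ is surjective, i.e.\ that $x_{p^k}$ generates a normal basis of $\bbQ(\zeta_{p^k})$ over $\bbQ$. I would do this by induction on $k$, the cases $k\le1$ being classical. Assuming the statement for $k-1$, naturality of $\phi^p$ shows that $\phi^p(C_{p^k})$ carries the image of the restriction map from $\bbQ[\Out(C_{p^{k-1}})]$ onto the subfield $\bbQ(\zeta_{p^{k-1}})\subset\bbQ(\zeta_{p^k})$, so its image already contains $\bbQ(\zeta_{p^{k-1}})$; since $x_{p^k}\equiv\zeta_{p^k}\pmod{\bbQ(\zeta_{p^{k-1}})}$ and $\bbQ(\zeta_{p^{k-1}})$ together with the Galois conjugates of $\zeta_{p^k}$ spans $\bbQ(\zeta_{p^k})$, the image is everything.

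The step I expect to be the main obstacle is precisely this last one: certifying the \emph{specific} element $x_{p^k}$, rather than merely some element furnished abstractly by the Normal Basis Theorem, as a normal basis generator, and doing so coherently all the way up the tower of cyclotomic extensions. The trace identity of the third paragraph is the device that makes the induction, and hence both compatibility and bijectivity, go through. (Alternatively one can invoke the Normal Basis Theorem for mere existence and then verify directly that the resolvents $\sum_{\sigma}\chi(\sigma)\,\sigma(x_{p^k})$ are nonzero for every character $\chi$ of $\Out(C_{p^k})$.)
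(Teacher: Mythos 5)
Your overall strategy mirrors the paper's: reduce to cyclic $p$-groups, write down explicit maps $\bbQ[\Out(C_{p^k})]\to\bbQ(\zeta_{p^k})$ using the element $x_{p^k}=\zeta_{p^k}+\zeta_{p^k}^p+\cdots+\zeta_{p^k}^{p^{k-1}}$, check compatibility with the preferred restrictions, and then invoke Lemma~\ref{lemma:outassemble} to assemble. However, there is a genuine computational error that sinks the compatibility check as written.

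Your trace identity is off by a factor of $p$. With $G=\mathrm{Gal}(\bbQ(\zeta_{p^k})/\bbQ(\zeta_{p^{k-1}}))=\{\sigma_l:\zeta_{p^k}\mapsto\zeta_{p^k}^{1+lp^{k-1}}\}_{l=0}^{p-1}$, the term $\zeta_{p^k}$ has free orbit and its orbit sum does vanish, exactly as you say. But each of the remaining summands $\zeta_{p^k}^{p^i}$ with $1\le i\le k-1$ is \emph{fixed} by $G$, so it contributes once for \emph{each} of the $p$ elements $\sigma_l$, i.e.\ $p$ times, to the trace. Hence
\[
\mathrm{Tr}_{\bbQ(\zeta_{p^k})/\bbQ(\zeta_{p^{k-1}})}(x_{p^k}) \;=\; p\,\bigl(\zeta_{p^k}^{p}+\zeta_{p^k}^{p^2}+\cdots+\zeta_{p^k}^{p^{k-1}}\bigr) \;=\; p\cdot p_{p^k,p^{k-1}}^{\ast}(x_{p^{k-1}}),
\]
not $p_{p^k,p^{k-1}}^{\ast}(x_{p^{k-1}})$ as you claim. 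Consequently the maps classified by the \emph{unscaled} elements $x_{p^k}$ do not commute with the restrictions, and your $\phi^p$ is not a morphism of $\Outopcycp$-modules. The paper handles this by rescaling: it takes $\phi_{p^k}$ to be the equivariant map classified by $\frac{-1}{p^{k-1}}x_{p^k}$ (and treats $k=0$ by hand, which is where the sign comes from), precisely so that the factor of $p$ appearing in the fiber-sum computation is absorbed by the tower of denominators. The statement of the proposition is a bit loose about this normalization, but the proof makes it explicit and it is essential. Your inductive argument for bijectivity is sound and is a reasonable alternative to the paper's terser ``direct computation shows $x_{p^k}$ is a normal basis generator, then compare ranks,'' but that does not rescue the missing normalization in the compatibility step.
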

\begin{proof}
The isomorphism is first constructed over cyclic $p$-groups where one can write down a normal basis explicitly. We abbreviate $X=\zeta_{p^k}$ in the following.
A direct computation shows that the element $X+X^p+\cdots+ X^{(p^{k-1})} \in \bbQ(\zeta_{p^k})$ generates the cyclotomic field extension as an $\Out(C_{p^k})$-module.
It follows that the $\Out(C_{p^k})$-equivariant map
\begin{equation*}
\phi_{p^k}:\bbQ[\Out(C_{p^k})]\xrightarrow{\frac{-1}{p^{k-1}}(X+X^p+\cdots+ X^{(p^{k-1})})}\bbQ (\zeta_{p^k})
\end{equation*}
is an isomorphism because both sides have the same rank. In the degenerate case $k=0$ we take for $\phi_1$ the canonical identification sending the identity to $1$. 

We now check compatibility with restrictions. Under the summation of fibers map the unit element $1\in \Out(C_{p^k})$
maps to the sum over the elements $1+lp^{k} \in  \Out(C_{p^{k+1}})$ for $0\leq l \leq p-1$, where we have used additive notation. Letting these act as elements in the Galois group of $\bbQ(\zeta_{p^{k+1}})$ and writing $Y=\zeta_{p^{k+1}}$, we get the equality
\begin{equation*} (1+lp^{k} )^{\ast}(Y+Y^p+\cdots+Y^{p^{k}})=Y\cdot Y^{lp^k}+Y^p+Y^{p^2}+\cdots+Y^{p^{k}}\text{.}
\end{equation*}
After summation over $l$ this becomes $p\cdot (Y^p+Y^{p^2}+\cdots+Y^{p^k})$ because the terms $Y^{lp^k}$ add up to the minimal polynomial
and hence do not contribute. But this is just the restriction of $p\cdot (X+X^p+\cdots+X^{(p^{k-1})})$. So the scaling  ensures that for varying $k\geq 0$, the $\phi_{p^k}$ commute with the restriction maps. The sign is needed for the case $k=0$.

By the previous lemma, these maps assemble to an isomorphism of $\Outop$-modules.

\end{proof}

\section{Computation of \texorpdfstring{$\Ext$}{Ext}-groups}

As we recalled in the previous section, $\tau(\RU)$ is concentrated at cyclic groups. 
The following lemma shows that we thus may restrict to $\Outopcyc$-modules.
\begin{Lemma}\label{lemma:restrictcyclic}
Restriction along the inclusion \(	\iota:\Out_{\operatorname{cyc}}\hookrightarrow\Out	\) induces isomorphisms
\[
	\Ext_{\Outop\hyph\module_\bbQ}^n(X, \tau(\RUrat))\cong \Ext_{\Outopcyc\hyph\module_\bbQ}^n(\iota^{\ast}X, \iota^\ast(\tau(\RUrat)))
\]
on $\Ext$-groups for all $n\geq0$.
\end{Lemma}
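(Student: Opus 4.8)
Write $M=\tau(\RUrat)$; recall from Section~\ref{sec:ruout} that, by Artin's theorem, $M$ vanishes on all non-cyclic groups. I would compute both sides from a projective resolution of $X$ in $\Outopmod_\bbQ$. For a finite group $G$, let $P_G=\bbQ[\Out(-,G)]$ be the representable $\Outop$-module, so that $\Hom_{\Outop}(P_G,-)$ is naturally isomorphic to evaluation at $G$; these are projective, and every projective of $\Outopmod_\bbQ$ is a retract of a direct sum of such. The first and only real point is to observe that the restriction functor $\iota^\ast$ is exact — it just forgets the values outside the full subcategory $\Outopcyc$ — and that it carries each $P_G$ to a projective: when $G$ is cyclic, $\iota^\ast P_G$ is the representable $\Outopcyc$-module on $G$; when $G$ is not cyclic, $\iota^\ast P_G=0$, because a cyclic group admits no epimorphism onto a non-cyclic group and so $\Out(H,G)=\emptyset$ for all cyclic $H$. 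Since $\iota^\ast$ also preserves direct sums, it preserves all projectives.

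\noindent\textbf{Carrying it out.} Given a projective resolution $P_\bullet\twoheadrightarrow X$ in $\Outopmod_\bbQ$, the complex $\iota^\ast P_\bullet\twoheadrightarrow\iota^\ast X$ is then a projective resolution in $\Outopcyc\hyph\module_\bbQ$, so the two $\Ext$-groups in the statement are the cohomology groups of $\Hom_{\Outop}(P_\bullet,M)$ and of $\Hom_{\Outopcyc}(\iota^\ast P_\bullet,\iota^\ast M)$, and the map to be shown invertible is the one induced by the cochain map $f\mapsto\iota^\ast f$. So it will be enough to check that
\[
	\Hom_{\Outop}(P,M)\lra\Hom_{\Outopcyc}(\iota^\ast P,\iota^\ast M)
\]
is an isomorphism for every projective $P$, and since both sides take direct sums in $P$ to products and retracts to retracts while $\iota^\ast$ preserves direct sums, one reduces to $P=P_G$. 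Here the left-hand side is $M(G)$ by Yoneda. If $G$ is cyclic, the right-hand side is $(\iota^\ast M)(G)=M(G)$ and the map is the identity; if $G$ is not cyclic, then $\iota^\ast P_G=0$ makes the right-hand side vanish, and $M(G)=0$ as well since $M$ is concentrated at cyclic groups. Taking cohomology finishes the argument. (One could equally observe that $M$ is the right Kan extension of $\iota^\ast M$ along $\iota$ and use the adjunction $\iota^\ast\dashv\iota_\ast$ with $\iota_\ast$ exact, but that exactness is once more just the statement that $\iota^\ast$ preserves projectives.)

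\noindent\textbf{The main obstacle.} There is essentially none; all the content is in the two inputs above — the vanishing of $\tau(\RUrat)$ on non-cyclic groups (Artin's induction theorem) and the nonexistence of surjections from a cyclic onto a non-cyclic group — which together ensure that $\iota^\ast$ sends the representable projectives to projectives or zero and leaves the relevant $\Hom$-groups unchanged. The only step requiring a word of care is that the comparison is a genuine morphism of cochain complexes, not merely a degreewise isomorphism; this is immediate from the naturality of $\Hom_{\Outop}(P_G,-)\cong(-)(G)$ in the module variable.
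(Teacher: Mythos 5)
Your proof is correct and takes essentially the same approach as the paper: you verify directly via Yoneda and the fact that cyclic groups admit no epimorphisms onto non-cyclic groups that $\iota^\ast$ carries projectives to projectives and that $\Hom_{\Outop}(P,M)\to\Hom_{\Outopcyc}(\iota^\ast P,\iota^\ast M)$ is an isomorphism for projective $P$, while the paper packages exactly these two facts as exactness of the right Kan extension $\iota_\ast$ (extension by zero) together with the unit $M\to\iota_\ast\iota^\ast M$ being an isomorphism, and concludes by adjointness. As your closing parenthetical already observes, these are the same argument in different clothing.
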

\begin{proof}
The right Kan extension $\iota_{\ast}$ simply extends by $0$ to non-cyclic groups. Since $\tau(\RUrat)$ is concentrated at cyclic groups,
it is right-induced in the sense that the unit map $\tau(\RUrat)\ra\iota_{\ast}\iota^{\ast}(\tau(\RUrat))$ is an isomorphism.
Both restriction and right Kan extension are exact functors and so $\iota^{\ast}$ preserves projective resolutions. The claim now follows by adjointness.
\end{proof}

\begin{Remark}\label{remark:infinitehomological}
Even though we work rationally, the derived category $\mathcal D(\Outopmod_\bbQ)$ does \emph{not} split and all higher extensions can occur.
For example, the $\Outop$-module $R_e\bbQ$ that consists of a copy of $\bbQ$ at the trivial group does not admit a finite projective resolution.
It suffices to show this over cyclic groups since the restriction functor along the inclusion of the full subcategory $\Out_{\cyc}\subset\Out$ preserves projective resolutions.
Let $F_n=\bbQ\{\Out(-,C_n)\}/\Out(C_n)$ be the 'semi-free' $\Outopcyc$-module generated by $\bbQ$ at the cyclic group $C_n$,
characterized by the natural isomorphism 
\[
	\Hom_{\Outopcyc\hyph\module}(F_n, X)\cong X(C_n)^{\Out(C_n)}
\]
that corresponds to the universal element $\Id_{C_n}$. Then $F_n$ consists of a single copy of $\bbQ$ at every $C_m$ such that $n$ divides $m$,
with identities as structure maps, and vanishes otherwise. A projective resolution of $R_e\bbQ$ is defined as follows:
\[
R_e\bbQ\twoheadleftarrow F_e\longleftarrow \bigoplus_{p}F_p\longleftarrow\bigoplus_{p_1<p_2}F_{p_1p_2}\longleftarrow\cdots\longleftarrow\bigoplus_{p_1<p_2<\cdots<p_n}F_{p_1p_2\cdots p_n}\longleftarrow\cdots
\]
The sum $P_n=\oplus_{p_1<\cdots<p_n}F_{p_1\cdots p_n}$ is indexed by all $n$-element sets of primes and the differential $d_n:P_n\ra P_{n-1}$ is determined by the formula
\[
d(e^{p_1\cdots p_n}_{p_1<\ldots<p_n})= \sum^{n}_{i=1}(-1)^ie^{p_1\cdots\widehat{p_i}\cdots p_n}_{p_1<\cdots<\widehat{p_i}<\ldots<p_n},
\]
where $e^m_{p_1<\cdots<p_n} \in P_n(C_m)$ denotes the generator of the summand indexed by $p_1,\ldots, p_n$,
which is given by the class of (any) surjection $[C_m\twoheadrightarrow C_{p_1\cdots p_n}]\in F_{p_1\cdots p_n}(C_m)$. 
For every integer $m\geq 1$, we construct a chain contraction $h$ of the complex $P_\ast(C_m)$ of $\bbQ\hyph$vector spaces:
Let $p_1<\cdots<p_{\omega(m)}$ be the prime factors of $m$.
Given a sequence
\[
1\leq\alpha(1)<\cdots<\alpha(n)\leq \omega(m),
\]
we set $e^m_\alpha=e^m_{p_{\alpha(1)}<\cdots<p_{\alpha(n)}}$,
and for $k\not\in \operatorname{im}(\alpha)$ we let $e^m_{\alpha,k}=e^m_{\tilde\alpha}$ be the element corresponding to the sequence
\(
	\tilde\alpha=(\cdots<\alpha(j-1)<k<\alpha(j)<\cdots)
\)
obtained by adding $k$ to it. The maps $h_n:P_n(C_m)\ra P_{n+1}(C_m)$ are then defined by
\[
h_n(e^m_{\alpha})=\frac{1}{\omega(m)}\sum_{k\not\in \operatorname{im}(\alpha)}(-1)^{j(\alpha, k)}e^m_{\alpha, k}
\]
and one checks that this indeed yields a chain contraction, showing that $P_\ast\twoheadrightarrow R_e\bbQ$ is a resolution.
We claim that the projections $\xi_n:P_n\twoheadrightarrow\operatorname{coker}(d_{n+1})$ define non-trivial elements
$[\xi_n]\in \Ext^n_{\Outopcyc\hyph\module_\bbQ}(R_e\bbQ, \operatorname{coker}(d_{n+1}))$:
The 'universal' cocycle $\xi_n$ is non-trivial by a dimension count and cannot be a coboundary since there are no non-zero morphisms to the cokernel.
The group $\Hom_{\Outopcyc\hyph\module_\bbQ}(P_{n-1}, \operatorname{coker}(d_{n+1}))=0$ is trivial because $P_n$ (and hence the cokernel) vanishes at all groups at which $P_{n-1}$ is generated as an $\Outopcyc$-module.
\end{Remark}

The identification $\tau(\RUrat)\cong\bbQ[\Out (-)]$ from the previous section allows us to describe maps into it as an inverse limit over the poset of natural numbers with partial order given by the divisibility relation:
\begin{Definition}\label{def:inversesystem}
If $X$ is an $\Outopcyc$-module, we denote by $X^{\vee}$ the inverse system obtained after forming $\bbQ$-linear duals in each level and forgetting all group actions. The structure maps are defined by precomposition with the preferred restriction maps (\ref{eq:daeqn}).
\end{Definition}

\begin{Proposition}
There is a natural isomorphism
\begin{equation*}
\Hom_{\Outopcyc\hyph\module_\bbQ}(X, \bbQ[\Out(-)])\cong \varprojlim_{(\mathbb{N},|)}(X^{\vee})\text,
\end{equation*}
which sends $\phi:X\ra\bbQ[\Out(-)]$ to the collection of linear forms
\[
	(\tilde\phi_n:X(C_n)\overset{\phi_n}{\lra}\bbQ[\Out(C_n)]\overset{\operatorname{pr}_e}{\lra}\bbQ)_{n\in \bbN}.
\]
\end{Proposition}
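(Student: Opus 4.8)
\noindent\emph{Proof proposal.} The plan is to reduce the whole statement to one elementary fact about the regular representation: for a finite group $G$ and any $\bbQ[G]$-module $V$, evaluation at the identity
\[
	\Hom_{G}(V,\bbQ[G])\overset{\cong}{\lra}\Hom_{\bbQ}(V,\bbQ),\qquad f\longmapsto \operatorname{pr}_e\circ f
\]
is a natural isomorphism, with inverse $\lambda\mapsto\bigl(v\mapsto\sum_{g\in G}\lambda(g^{-1}v)\,g\bigr)$; this is the adjunction between restriction to the trivial subgroup and (co)induction applied to $\bbQ[G]$, equivalently the dual form of the universal property of the regular representation already used in the text. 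Alongside this I would record the compatibility of $\operatorname{pr}_e$ with the preferred restriction maps of $\bbQ[\Out(-)]$: for $n\mid m$ the map $p^{\ast}\colon\bbQ[\Out(C_n)]\ra\bbQ[\Out(C_m)]$ sends a basis vector $j\in(\bbZ/n\bbZ)^{\times}$ to the sum of those $\tilde\jmath\in(\bbZ/m\bbZ)^{\times}$ reducing to $j$, and since the identity of $(\bbZ/m\bbZ)^{\times}$ reduces to the identity of $(\bbZ/n\bbZ)^{\times}$ this yields
\[
	\operatorname{pr}_e\circ p^{\ast}=\operatorname{pr}_e\colon\bbQ[\Out(C_n)]\lra\bbQ .
\]

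Granting these, I would define the comparison map $\Theta\colon\Hom_{\Outopcyc\hyph\module_\bbQ}(X,\bbQ[\Out(-)])\ra\varprojlim_{(\bbN,\mid)}(X^{\vee})$ on a morphism $\phi=(\phi_n)_n$ of $\Outopcyc$-modules by $\Theta(\phi)=(\tilde\phi_n)_n$ with $\tilde\phi_n=\operatorname{pr}_e\circ\phi_n$, exactly as in the statement. That $\Theta(\phi)$ lies in the inverse limit is the identity $\tilde\phi_m\circ p^{\ast}=\tilde\phi_n$ for $n\mid m$, which I would read off from
\[
	\tilde\phi_m\circ p^{\ast}=\operatorname{pr}_e\circ\phi_m\circ p^{\ast}=\operatorname{pr}_e\circ p^{\ast}\circ\phi_n=\operatorname{pr}_e\circ\phi_n=\tilde\phi_n,
\]
the second equality being the defining naturality of $\phi$ for the preferred restriction and the third the compatibility recorded above. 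Naturality of $\Theta$ in $X$ is then immediate, both sides being computed by precomposition with a morphism.

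Injectivity of $\Theta$ is formal: $\phi$ is its collection of equivariant components $\phi_n$, and by the first displayed isomorphism each $\phi_n$ is recovered from $\operatorname{pr}_e\circ\phi_n=\tilde\phi_n$. For surjectivity I would start from $(\tilde\phi_n)_n\in\varprojlim(X^{\vee})$, let $\phi_n\colon X(C_n)\ra\bbQ[\Out(C_n)]$ be the unique $\Out(C_n)$-equivariant map with $\operatorname{pr}_e\circ\phi_n=\tilde\phi_n$, and verify that the $\phi_n$ commute with the preferred restrictions, i.e.\ $\phi_m\circ p^{\ast}=p^{\ast}\circ\phi_n$ for $n\mid m$. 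This is the one step I expect to be the main obstacle and the only point needing care: both composites are maps $X(C_n)\ra\bbQ[\Out(C_m)]$ that are equivariant for $\Out(C_m)$ --- which acts on $X(C_n)$ and on $\bbQ[\Out(C_n)]$ through the canonical surjection $\Out(C_m)\twoheadrightarrow\Out(C_n)$ --- so by the first displayed isomorphism applied \emph{to the group $\Out(C_m)$} they agree as soon as their composites with $\operatorname{pr}_e$ do, and
\[
	\operatorname{pr}_e\circ\phi_m\circ p^{\ast}=\tilde\phi_m\circ p^{\ast}=\tilde\phi_n=\operatorname{pr}_e\circ\phi_n=\operatorname{pr}_e\circ p^{\ast}\circ\phi_n ,
\]
using the inverse-limit relation on $(\tilde\phi_n)_n$ and $\operatorname{pr}_e\circ p^{\ast}=\operatorname{pr}_e$. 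Hence $(\phi_n)_n$ is a morphism of $\Outopcyc$-modules with $\Theta(\phi)=(\tilde\phi_n)_n$, so $\Theta$ is the desired isomorphism. The only non-formal ingredient is the bookkeeping around ``evaluation at $e$'' versus the summation-over-fibers restrictions of $\bbQ[\Out(-)]$, in particular remembering to invoke the universal property of the regular representation relative to the larger group $\Out(C_m)$ when comparing maps over a divisor; everything else is that universal property together with diagram chasing.
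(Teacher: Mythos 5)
Your proof is correct and takes essentially the same approach as the paper: both reduce everything to the universal property of the regular representation (an equivariant map into $\bbQ[G]$ is determined by its composite with $\operatorname{pr}_e$) together with the identity $\operatorname{pr}_e\circ p^{\ast}=\operatorname{pr}_e$. Your write-up is a bit more explicit than the paper's, in particular the surjectivity step where you invoke the universal property relative to the larger group $\Out(C_m)$ to upgrade the equality of linear forms $\tilde\phi_m\circ p^{\ast}=\tilde\phi_n$ to commutativity of $\phi$ with the preferred restrictions; the paper leaves this reverse implication implicit.
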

\begin{proof}
For any finite group, giving an equivariant map into the regular representation is equivalent to specifying the linear form obtained by projecting to the summand of the neutral element. Hence a map $\phi:X\ra \bbQ[\Out(-)]$ is uniquely  determined by the linear forms $\tilde\phi_n \in X(C_n)^{\vee}$. The condition that $\phi$ is a natural transformation
translates into the condition that these linear forms restrict to each other because the summation over the fibers map commutes with the projection to the summand of the neutral element in $\bbQ[\Out(-)]$. This means that the collection $\{\phi_n\}$ forms an element in the inverse limit.
\end{proof}

The functor $(-)^{\vee}$ is exact and turns sums into products. It sends enough projectives to injectives,
e.g.\ those of the form $F_nV=V\tensor_{\Out(C_n)} \bbQ\{\Out(-,C_n)\}$ for an $\Out(C_n)$-representation V.
Indeed, $F_nV$ is just the constant functor $V$ at cyclic groups with order divisible by $n$ and it vanishes elsewhere. Hence we can also identify the higher derived functors:
\begin{Corollary}\label{cor:extout}
The map of the previous proposition induces natural isomorphisms
\begin{equation*}
\Ext^{n}_{\Outopcyc\hyph\module_\bbQ}(X, \bbQ[\Out(-)])\cong {\varprojlim_{(\mathbb{N},\mid)}}^n(X^{\vee})
\end{equation*}
for all $n\geq 0$.
\end{Corollary}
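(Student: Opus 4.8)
The plan is to compute both sides from one and the same cochain complex. Pick a projective resolution $P_\bullet\twoheadrightarrow X$ in $\Outopcyc\hyph\module_\bbQ$; then $\Hom_{\Outopcyc\hyph\module_\bbQ}(P_\bullet,\bbQ[\Out(-)])$ computes $\Ext^\ast$ by definition, and I want to see that, under the isomorphism of the previous Proposition, it becomes the complex $\varprojlim_{(\bbN,\mid)}(P_\bullet^\vee)$, which will in turn compute ${\varprojlim_{(\bbN,\mid)}}^\ast(X^\vee)$ provided $P_\bullet^\vee$ is an injective resolution of the inverse system $X^\vee$.

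To arrange the latter I would choose $P_\bullet$ so that every $P_k$ is a direct sum of modules of the form $F_mV$. This is possible because these objects are projective and there are enough of them: the counit maps assemble into an epimorphism $\bigoplus_m F_mX(C_m)\twoheadrightarrow X$ (already the summand $m=n$ surjects onto $X(C_n)$, since $F_nW(C_n)=W$ with the counit being the identity there), so applying this to $X$ and then to the successive kernels produces a resolution of the desired shape. Now dualize termwise. Since $(-)^\vee$ is exact, $X^\vee\to P_\bullet^\vee$ is an exact coaugmented complex, i.e.\ a coresolution of $X^\vee$; and since $(-)^\vee$ turns direct sums into products, each $P_k^\vee$ is a product of objects $(F_mV)^\vee$. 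By the remark preceding the statement $(F_mV)^\vee$ is injective among inverse systems over $(\bbN,\mid)$ — concretely $F_mV$ is the constant functor $V$ on the multiples of $m$, so $(F_mV)^\vee$ is the value at $V^\vee$ of the right adjoint of the (exact) evaluation functor at $m$ — and a product of injectives is injective, so $P_\bullet^\vee$ is an injective resolution of $X^\vee$. Hence $\varprojlim_{(\bbN,\mid)}(P_\bullet^\vee)$ computes ${\varprojlim_{(\bbN,\mid)}}^n(X^\vee)$.

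It remains to compare the two complexes. The isomorphism of the previous Proposition is, by construction, natural in the $\Outopcyc$-module argument, so applying it in each degree yields an isomorphism of cochain complexes $\Hom_{\Outopcyc\hyph\module_\bbQ}(P_\bullet,\bbQ[\Out(-)])\cong\varprojlim_{(\bbN,\mid)}(P_\bullet^\vee)$. Taking cohomology identifies $\Ext^n_{\Outopcyc\hyph\module_\bbQ}(X,\bbQ[\Out(-)])$ with ${\varprojlim_{(\bbN,\mid)}}^n(X^\vee)$, and the resulting isomorphism is independent of the chosen resolution and natural in $X$ by the standard comparison arguments for derived functors.

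I expect the only genuinely nontrivial point to be the injectivity of $(F_mV)^\vee$ as an inverse system over $(\bbN,\mid)$ — equivalently, the assertion that $(-)^\vee$ carries enough projectives to injectives — which, however, is exactly what is recorded in the paragraph just before the statement; everything else is bookkeeping with resolutions and with the variance of the duality and limit functors.
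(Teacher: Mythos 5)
Your proof is correct and follows the same strategy the paper has in mind: dualize a projective resolution by sums of $F_mV$'s using the exactness of $(-)^\vee$ and the fact that $(F_mV)^\vee$ is injective (as a right adjoint of an exact evaluation applied to an injective $\bbQ$-module), then identify the resulting cochain complexes via the naturality of the previous Proposition. You have merely spelled out the details that the paper compresses into the remark preceding the Corollary, including a cleaner justification of why $(F_mV)^\vee$ is injective.
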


%By combining this with the equivalence $\tau$ and the identification $\tau(\RUrat)\cong\bbQ[\Out(-)]$ we obtain the theorem we wished to show:
We can now show the main theorem and its corollary:
\begin{proof}[Proof of Theorem \ref{thm:datheorem}]
Combining the equivalence $\GF_\bbQ\simeq\Outopmod_\bbQ$, Lemma \ref{lemma:restrictcyclic}, Proposition \ref{prop:outru},
and Corollary \ref{cor:extout} yields the following chain of natural isomorphisms:
\begin{align*}
\Ext^{k}_{\GF}(F, \RUrat)	\cong\Ext^k_{\Outop\hyph\module_\bbQ}(\tau F, \tau(\RUrat))					
					&\cong\Ext^k_{\Outop_{\operatorname{cyc}}\hyph\module_\bbQ}(\tau F,\tau(\RUrat))			\\
					&\cong\Ext^k_{\Outop_{\operatorname{cyc}}\hyph\module_\bbQ}(\tau F, \bbQ[\Out(-)])			\\
					&\cong{\varprojlim_{(\bbN,\mid)}}^k(\tau F)^{\vee}		\\
					&\cong{\varprojlim_{(\bbN,\mid)}}^k(F(C_n)^{\vee}_{\tr})\text{.}\qedhere
\end{align*}
\end{proof}
Finally, we observe that $(\mathbb{N},\mid)$ contains the sequential poset $\mathbb{N}$ as a cofinal subset via the factorials.
Restriction to this subset is exact and the left Kan extension takes an inverse system $\{X_{k!}\}_{k\in\mathbb{N}}$
defined over the factorials to the inverse system defined for all integers $n$ by $X_n=X_{k!}$, where $k$ is minimal such that $n$ divides $k!$.
Hence it is also exact and we only have to compute sequential limits.
\begin{proof}[Proof of Corollary \ref{cor:dacorollary}]
There is only a potential ${\varprojlim}^1$-term for sequential systems and this gives the first part:
\(
	\Ext^{n}_{\GF}(F, \RUrat)=0\text{ for }n\geq 2\text{.}
\)
The structure maps of $X=\bbQ[\Out(-)]$ are injective since they are defined by summation over the fibers of surjective maps.
So the inverse system associated with $X$ only consists of surjective maps and this implies the second part:
\(
\Ext^{1}_{\GF}(\RUrat, \RUrat)=0\text{.}
\)
\end{proof}

\bibliographystyle{alpha}
\bibliography{references}

\end{document}